 \newtheorem{thm}{Theorem}[section]
 \newtheorem{cor}[thm]{Corollary}
 \newtheorem{lem}[thm]{Lemma}
 \theoremstyle{definition}
 \newtheorem{defn}[thm]{Definition}
 \theoremstyle{remark}
 \numberwithin{equation}{section}
 \newtheorem{prethought}[thm]{Thought}
\begin{document}
\title{Minding's Theorem for Low Degrees of Differentiability}
\author{Josef F. Dorfmeister \thanks{Zentrum Mathematik, Technische Universit\"{a}t M\"{u}nchen, D-85747 Garching bei M\"{u}nchen, Germany, dorfm@ma.tum.de} \and 
Ivan Sterling \thanks{Mathematics and Computer Science Department, St Mary's College of Maryland, St Mary's City, MD 20686-3001, USA, isterling@smcm.edu}}
\maketitle
\begin{abstract}
\noindent We prove Minding's Theorem for $C^2$-immersions with constant negative Gauss curvature.  As a Corollary we also prove Minding's Theorem for $C^{1M}$-immersions in the sense of \cite{DS}.
\end{abstract}
\section{Statement of Main Theorem}
Let $\Omega$ denote a simply connected open set in $\mathbb{R}^2$.  We say that a function $f:\Omega \longrightarrow \mathbb{R}^\ell$ is of class $C^k(\Omega)$ (written $f \in C^k(\Omega)$) if all its derivatives up to order $k$ are continuous.  $k=\infty$ ($k=\omega)$ if the function is infinitely differentiable (resp. analytic).  A coordinate chart on $\Omega$ is a pair $(\beta_{(t^1,t^2)},U)$ where $U$ is open in $\Omega$ and $\beta_{(t^1,t^2)}:U \longrightarrow V \subset \mathbb{R}^2_{(t^1,t^2)}$ (always assumed to be at least $C^1(U)$ with non-vanishing Jacobian).  A metric (always assumed to be positive definite) $g = \sum g_{ij} dt^i dt^j$ is said to be of class $C^k(V)$ in the coordinate chart $(\beta_{(t^1,t^2)},U)$ if the coefficients $g_{ij} \in C^k(V)$. 
A coordinate chart $(\beta_{(x,y)},U)$ is called isothermic with respect to a metric $g=g_{11} dx^2+g_{12} dxdy + g_{21} dydx + g_{22} dy^2$  if $g_{11}\!=\!g_{22}$ and $g_{12}(=\!g_{21})\!=\!0$.  If $(\beta_{(x,y)},U)$ is isothermic then $h^2:=g_{11}\!=\!g_{22}$ is called the conformal factor of $g=h^2(dx^2+dy^2)$ with respect to $(\beta_{(x,y)},U)$.

A $C^n(\Omega)$, $n \geq 1$, map $f:\Omega \longrightarrow \mathbb{R}^\ell$ is called an immersion (always assumed to be regular) if $rank(df) \equiv 2$.  The $C^{n-1}(\Omega)$-metric, induced on $\Omega$ by $f$ is given by $f^*ds^2_{\mathbb{R}^\ell}$, where $ds^2_{\mathbb{R}^\ell}$ denotes the standard metric on $\mathbb{R}^\ell$.  If $\Omega_1$ has a metric $g_1$ and $\Omega_2$ has a metric $g_2$ then a $C^1(\Omega_1)$-immersion $\phi: (\Omega_1,g_1) \longrightarrow (\Omega_2,g_2)$ is called an isometry if $\phi^*(g_2) = g_1$.  We will prove the following case of Minding's Theorem, which was previously known only for $n \geq 3$.
\begin{thm}\label{mainthm}
Let $f:\Omega \stackrel{C^n}\longrightarrow \mathbb{R}^3$ ($n \geq 2$) be a $C^n(\Omega)$-immersion with the induced $C^{n-1}(\Omega)$-metric $f^*ds^2_{\mathbb{R}^3}$ and  Gauss curvature $K \equiv -1$.  Then there exists a $C^2(\Omega)$ isometry $\phi:(\Omega,f^*ds^2_{\mathbb{R}^3}) \longrightarrow (\mathbb{H}^2, ds^2_{\mathbb{H}^2})$,  where $ds^2_{\mathbb{H}^2}$ denotes the standard metric on $\mathbb{H}^2$.
\end{thm}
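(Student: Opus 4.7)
The plan is to construct the required isometry locally in Tchebyshev (asymptotic) coordinates adapted to $K\equiv-1$ and then globalize using the simple connectedness of $\Omega$. Fix a point $p_0\in\Omega$. Because $f\in C^2$, the second fundamental form $\mathrm{II}$ of $f$ has continuous coefficients, and $K\equiv-1$ forces its discriminant to be strictly positive everywhere; hence $\mathrm{II}$ defines two transverse continuous asymptotic line fields in a neighborhood of $p_0$. The first step is to establish that these line fields can be integrated to a Tchebyshev chart $(u,v)$ on a neighborhood $U$ of $p_0$ in which the asymptotic curves are the coordinate lines, parameterized by arc length. In such a chart the induced metric takes the standard form
\[
g = du^2 + 2\cos\varphi(u,v)\,du\,dv + dv^2,
\]
with $\varphi(u,v)\in(0,\pi)$ the angle between the two asymptotic directions at each point.

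The next step is to extract the sine-Gordon structure. Using the $C^1$-regularity of $g$ and the extrinsic hypothesis $K\equiv-1$, I would bootstrap to conclude that $\varphi$ is itself $C^2$ and satisfies the classical compatibility relation $\varphi_{uv}=\sin\varphi$. Given such $\varphi$, I would then construct the isometry by solving, in the Poincar\'e disk (or upper half-plane) model of $\mathbb{H}^2$, the completely integrable pair of ODEs that determines a Tchebyshev frame with the same angle function $\varphi$. Because $\mathbb{H}^2$ has constant Gauss curvature $-1$, the compatibility condition of this model system is precisely $\varphi_{uv}=\sin\varphi$, so the system is solvable and yields a local $C^2$ isometry $\Phi_U\colon U\to\mathbb{H}^2$. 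A standard monodromy argument using simple connectedness of $\Omega$, together with the fact that a local isometry of $\mathbb{H}^2$ is determined by its value and differential at one point (isometries acting simply transitively on orthonormal frames), then glues the local pieces into the desired global $C^2$ isometry.

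The main obstacle is the first step: producing Tchebyshev coordinates when $\mathrm{II}$ has only continuous entries. Standard ODE theory does not guarantee unique integrability of merely continuous line fields, so the asymptotic directions cannot be integrated na\"ively. I expect the resolution to exploit the extrinsic nature of the problem---asymptotic curves on $f(\Omega)$ are precisely those whose Euclidean acceleration is tangent to the surface---so that one works with a surface-valued frame system along candidate asymptotic curves rather than with the direction field alone, and one uses $K\equiv-1$ to bootstrap the regularity of $\varphi$ after the fact. The same mechanism should feed the $C^{1M}$ corollary, since by \cite{DS} objects of that regularity class retain the extrinsic features needed for this bootstrapping.
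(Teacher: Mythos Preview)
Your strategy diverges from the paper's and, as written, has two genuine gaps that you flag but do not close.

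First, the existence of Tchebyshev coordinates. With $f\in C^2$ the second fundamental form has merely continuous coefficients, so the asymptotic line fields are only $C^0$. Peano gives existence of integral curves but not uniqueness, and without uniqueness you cannot build a coordinate chart. Your proposed fix---replacing the direction field by an extrinsic frame system along candidate asymptotic curves---does not escape this: the frame ODE still has $C^0$ coefficients (they involve the entries of $\mathrm{II}$), so uniqueness is just as unavailable. There are special results (Hartman--Wintner type) that sometimes salvage this, but invoking them requires a real argument, not a hope.

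Second, the regularity bootstrap for $\varphi$. Even granting Tchebyshev coordinates, the Gauss equation in that gauge is $\varphi_{uv}=\sin\varphi$, a \emph{hyperbolic} PDE. Hyperbolic equations do not smooth their solutions: a weak solution with $C^0$ or $C^1$ data need not be $C^2$. So ``bootstrap to conclude that $\varphi$ is itself $C^2$'' has no mechanism behind it. This is exactly where the paper's choice of gauge pays off: in isothermic coordinates (available by Chern--Hartman--Wintner for $C^2$ immersions, with $C^1$ conformal factor $h^2$) the Gauss equation becomes the \emph{elliptic} Liouville equation $\Delta u=e^{2u}$ for $u=\ln h$. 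One shows $u$ is a weak solution by a distributional Maurer--Cartan computation, then invokes standard elliptic regularity (Gilbarg--Trudinger) to get $u\in C^\infty$, and finally Liouville's theorem supplies the holomorphic developing map into $\mathbb{H}^2$, which is the desired isometry. The globalization is automatic because the isothermic chart is already global on the simply connected $\Omega$.

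In short: switching from an elliptic gauge to a hyperbolic one is precisely what makes the low-regularity case hard, and your outline does not supply a substitute for elliptic smoothing.
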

\noindent From Theorem \ref{dsthm} in Section \ref{seccor}, proved in \cite{DS}, we immediately have the following Corollary \ref{maincor}.  The $C^{1M}$ surfaces (see definition in Section \ref{seccor}) which appear in Corollary \ref{maincor} arise naturally in the loop group classification of $K \equiv -1$ surfaces \cite{DS}.  For a further explanation of the terms used in Corollary \ref{maincor} see Section \ref{seccor}.
\begin{cor} \label{maincor}
Let $f=f_{asyche}:\Omega \stackrel{C^{1M}}{\longrightarrow} \mathbb{R}^3$ be a $C^{1M}$-immersion in asymptotic Chebyshev coordinates.  Assume  $N=N_{asyche}:= \frac{f_x \times f_y}{\sin{\theta}}$ is $C^{1M}$, $N_{xy}=N_{yx}=\cos \theta N$, $f_x = N \times N_x$, and $f_y=-N \times N_y$. (Note this implies  $K \equiv -1$.)   Then there exists a $C^2(\Omega)$ isometry $\phi:(\Omega,f^*ds^2_{\mathbb{R}^3}) \longrightarrow (\mathbb{H}^2, ds^2_{\mathbb{H}^2})$.
\end{cor}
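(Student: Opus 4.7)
The plan is to reduce Corollary \ref{maincor} to the main Theorem \ref{mainthm} by invoking the regularity-upgrade result Theorem \ref{dsthm} from \cite{DS}. The hypotheses of the corollary---asymptotic Chebyshev coordinates with $N_{asyche}$ of class $C^{1M}$, together with the Lelieuvre-type relations $f_x = N\times N_x$, $f_y = -N\times N_y$, and the Gauss-type compatibility $N_{xy}=N_{yx}=\cos\theta\, N$---are designed to be exactly the package under which Theorem \ref{dsthm} promotes the data to a $C^2$-immersion. So my first step is to cite Theorem \ref{dsthm} to obtain (either) a $C^2$-immersion $\widetilde f:\Omega\to\mathbb{R}^3$ with $\widetilde f^* ds^2_{\mathbb{R}^3}=f^* ds^2_{\mathbb{R}^3}$, or equivalently to conclude that $f$ itself is, as a map to $\mathbb{R}^3$, of class $C^2$ in suitable coordinates.

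Once that is in hand, the second step is a direct application of Theorem \ref{mainthm} in the case $n=2$. Since the structure equations stated in the corollary already imply $K\equiv -1$ (as the parenthetical remark in the statement notes), Theorem \ref{mainthm} produces a $C^2$-isometry $\phi:(\Omega,\widetilde f^* ds^2_{\mathbb{R}^3})\to(\mathbb{H}^2, ds^2_{\mathbb{H}^2})$. Because the induced metric is unchanged by the upgrade, this $\phi$ is simultaneously an isometry from $(\Omega, f^* ds^2_{\mathbb{R}^3})$ to $(\mathbb{H}^2, ds^2_{\mathbb{H}^2})$, which is exactly what the corollary asserts.

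The main (and essentially only) point requiring care is lining up the hypotheses of Theorem \ref{dsthm} with the data listed in Corollary \ref{maincor}; the analytic content---proving that one can isometrically develop a $C^2$ surface of constant negative curvature into $\mathbb{H}^2$---is already packaged inside Theorem \ref{mainthm}, and the regularity bridge from $C^{1M}$ to $C^2$ is packaged inside Theorem \ref{dsthm}. I do not expect any genuine obstacle beyond that bookkeeping, which is consistent with the author's remark that the corollary follows ``immediately'' from Theorem \ref{dsthm}.
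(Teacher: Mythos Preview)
Your approach is correct and essentially identical to the paper's: invoke Theorem~\ref{dsthm} to obtain a $C^1$-diffeomorphism $\rho$ with $f_{graph}=f\circ\rho$ a regular $C^2$-immersion, and then apply Theorem~\ref{mainthm} with $n=2$. One minor caveat: the induced metric is not literally ``unchanged'' by this reparametrization---one has $(f\circ\rho)^*ds^2_{\mathbb{R}^3}=\rho^*(f^*ds^2_{\mathbb{R}^3})$, not $f^*ds^2_{\mathbb{R}^3}$---so the isometry furnished by Theorem~\ref{mainthm} in graph coordinates must be precomposed with $\rho^{-1}$ to get the isometry for $(\Omega,f^*ds^2_{\mathbb{R}^3})$; the paper glosses over this in the same way, simply saying that ``in the new coordinates \ldots\ all the conclusions of Theorem~\ref{mainthm} hold.''
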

\section{Preliminary Results}
The proof of Theorem \ref{mainthm} will be as follows.  By Theorem \ref{thmchw}, is suffices to prove the theorem assuming isothermic coordinates, with a metric $h^2 (dx^2+dy^2)$ and $h \in C^1$.  Using distributional derivatives we will prove that $u=\ln h$ is a weak solution to the equation $\Delta u = e^{2u}$.  Using bootstrap results from elliptic PDE theory we will first show that $u \in C^2(\Omega)$ and then that $u \in C^\infty(\Omega)$.  Theorem \ref{mainthm} will then follow by Liouville's Theorem, Theorem \ref{thmMira}, which will also imply that $u \in C^\omega(\Omega)$.  Thus, $u$ is analytic if we use isothermic coordinates.
\subsection{Isothermic Coordinates} \label{secchw}
The case $n=2$ of the following version of Theorem (*) p.301 \cite{CHW} is what we'll need for the first part of the proof.
\begin{thm}(Chern-Hartman-Wintner's Existence of Isothermic Coordinates Theorem) \label{thmchw}
Let $f:\Omega \stackrel{C^n}\longrightarrow \mathbb{R}^3$ ($n \geq 2$) be a $C^n$-immersion with the induced $C^{n-1}$-metric $f^*ds^2_{\mathbb{R}^3}$ and  $C^{n-2}$ Gauss curvature $K$.  Then there exist a $C^n(\Omega)$ coordinate chart $(\beta,\Omega)$  isothermic with respect to $f^*ds^2_{\mathbb{R}^3}$ with $C^{n-1}(\beta(\Omega))$ conformal factor $h^2$.
\end{thm}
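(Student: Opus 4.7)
The strategy is to reduce the existence of isothermic coordinates to solving a Beltrami equation for a complex-valued function $w$ on $\Omega$, whose real and imaginary parts will furnish the isothermic chart $\beta = (u,v)$. Fix an initial coordinate chart in which the induced metric reads $g = E\,dx^2 + 2F\,dx\,dy + G\,dy^2$ with $E,F,G\in C^{n-1}$, and set $z = x+iy$. A direct computation identifies a $C^n$ map $w = u+iv$ with non-vanishing $w_z$ as giving isothermic coordinates, $g = h^2(du^2+dv^2)$, exactly when $w$ satisfies
\[
w_{\bar z} \;=\; \mu\, w_z, \qquad \mu := \frac{E - G + 2iF}{E + G + 2\sqrt{EG - F^2}},
\]
with $|\mu| < 1$ pointwise. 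Since the denominator defining $\mu$ is bounded away from zero on compacta, the regularity of the metric transfers to $\mu \in C^{n-1}(\Omega)$.

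I would then invoke the classical Korn--Lichtenstein / Chern / Hartman--Wintner existence theory for the Beltrami equation: for $\mu \in C^{n-1}$ with $n-1 \geq 1$, a local $C^n$ homeomorphic solution $w$ with $w_z$ nowhere vanishing exists. The standard proof sets up a fixed-point equation of the form $w_z = 1 + S(\mu w_z)$ using the Beurling transform $S$, obtains $w_z$ first in a Hölder class via Calder\'on--Zygmund-type bounds for $S$, and then bootstraps by differentiating the Beltrami equation itself to upgrade $w_z$ to $C^{n-1}$, which yields $w \in C^n$. With $w$ in hand, the relation $g = h^2|dw|^2$ algebraically determines the conformal factor
\[
h^2 \;=\; \frac{E+G - 2\operatorname{Re}\bigl(\bar\mu(E-G+2iF)\bigr)}{2(1-|\mu|^2)\,|w_z|^2},
\]
a rational expression in $E, F, G, \mu$ and in the first derivatives of $w$, hence of class $C^{n-1}$.

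To produce a single chart on all of $\Omega$ rather than merely a local one, I would observe that any two local solutions $w_1, w_2$ of the Beltrami equation differ on overlaps by a holomorphic diffeomorphism of their images (since $w_1 \circ w_2^{-1}$ is then annihilated by $\partial_{\bar{w_2}}$). A standard monodromy argument exploiting the simple-connectedness of $\Omega$ allows the local solutions to be spliced into a single $C^n$ coordinate chart $(\beta,\Omega)$ with $C^{n-1}$ conformal factor, as required.

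The principal obstacle is the borderline case $n = 2$, where $\mu$ is only of class $C^1$. At this endpoint the Beurling transform suffers a logarithmic loss at integer regularity, so a naive application of the singular-integral estimates yields only $w \in C^{1,\alpha}$ and fails to deliver a $C^2$ chart. Overcoming this endpoint obstruction, so that the Beltrami equation with a $C^1$ coefficient admits a genuine $C^2$ solution, is the technical heart of the argument and is exactly what is established in \cite{CHW}; it is this refinement, rather than the formal reduction above, that makes the pairing (chart $\in C^n$, conformal factor $\in C^{n-1}$) sharp in the statement.
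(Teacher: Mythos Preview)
The paper does not give its own proof of the local existence; it quotes the result as Theorem~(*) of \cite{CHW} and then, in the paragraph immediately following the statement, supplies only the passage from local isothermic charts to a single global one on $\Omega$. Your sketch of the local step --- reduce to the Beltrami equation $w_{\bar z}=\mu w_z$ with $\mu\in C^{n-1}$, and flag the endpoint $n=2$ (where the singular-integral bootstrap loses a derivative) as the genuine difficulty resolved in \cite{CHW} --- is precisely the content the paper is citing, so on that score you and the paper agree.

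The difference is in the globalization. You propose a monodromy argument: local solutions differ on overlaps by holomorphic diffeomorphisms, and simple connectedness lets one splice. That produces a well-defined global developing map $w:\Omega\to\mathbb{C}$ solving the Beltrami equation, but monodromy alone does not force $w$ to be \emph{injective}, which is what ``coordinate chart $(\beta,\Omega)$'' demands. The paper instead observes that the local isothermic charts constitute a $C^\omega$ conformal atlas $\mathcal{B}$ on $\Omega$, applies the uniformization theorem to get a biholomorphism $(\Omega,\mathcal{B})\to(D,\mathcal{A})$, and composes with a Riemann map $(D,\mathcal{A})\to(\Omega,\mathcal{A})$; the composite is then automatically a global $C^n$ isothermic chart. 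Your route can be completed, but it ultimately needs this same injectivity input (uniformization or an equivalent global argument), which the paper invokes explicitly and you have left implicit.
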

It is pointed out in \cite{CHW} that for any two charts $\beta_{(x,y)}, \beta'_{(x',y')}$, isothermic with respect to any metric $g$, the transition function $\tau = \beta' \circ \beta^{-1}$ is analytic.  In other words such mappings $\tau$ are of the form $x'+iy' = \tau(z)$ where $\tau$ is an analytic function of $z=x+iy$.  The set of such charts therefore form a $C^\omega$-atlas, which we denote by $\cal{B}$, and hence define a conformal structure making $\Omega$ a simply connected Riemann surface.  Let $\cal{A}$ denote the standard conformal structure on open subsets of $\mathbb{C}^2$.  By the uniformization theorem $(\Omega,\cal{B})$ is bi-holomorphic to $(D,\cal{A})$, for some simply connected open subset $D \subset \mathbb{C}^2$ by some map $\pi: D \longrightarrow \Omega$.  In our case, $D$ could be chosen to be the unit disk.  Then, by the Riemann mapping theorem, $(D,\cal{A})$ is bi-holomorphic to $(\Omega,\cal{A})$, by some $\rho: \Omega \longrightarrow D$.  Finally by definition, $\beta:=\rho^{-1} \circ \pi^{-1}$ is a global chart isothermic with respect to $f^*ds^2_{\mathbb{R}^3}$.  We will choose such a  global chart in the discussion below.
\subsection{Distributions and Their Derivatives} In this subsection we review distributions and their derivatives.  These will be used in the next subsection to show that our $u=\ln h$ is a weak solution to $\Delta u = e^{2u}$.  To be precise we use subscripts $x,y$ for partial differentiation and subscripts ${\cal X,Y}$ for partial distributional differentiation.

Following \cite{GT} the set of test functions for our distributions will be $C^1_0(\Omega)$,  the set of compactly supported continuously differentiable functions $v: \Omega \longrightarrow \mathbb{R}$ or $v:\Omega \longrightarrow \mbox{Mat}(2,2,\mathbb{R})$.  On $\mbox{Mat}(2,2,\mathbb{R})$ we use the inner product ${<A,B>}=\mbox{trace} A^tB$ and on $\mathbb{R}$ the inner product $<a,b> = ab$.  A distribution is defined to be a linear map $T: C^1_0(\Omega) \longrightarrow \mathbb{R}$.  Every locally integrable function $f \in L^1_{loc}(\Omega)$ defines a distribution by $T_f(v)=\int_\Omega f v$.  Such distributions are called regular and by an abuse of notation we use $f$ to denote $T_f$.  If $T$ is a distribution and $h \in C^1(\Omega)$, then $hT$ is the distribution
\[(hT)(v) = T(h^tv).\]
We define partial differentiation only for regular distributions.  If $f=T_f$ is a regular distribution, then the distributional derivatives $\partial_{\cal{X}}$ and $\partial_{\cal{Y}}$ are defined by
\begin{eqnarray*}
(\partial_{\cal{X}} f)(v) &=& -\int_\Omega <f,\partial_x v>, \\
(\partial_{\cal{Y}} f)(v) & =& -\int_\Omega <f,\partial_y v>.
\end{eqnarray*}
We will need the following standard results which are proven by straightforward calculations.
\begin{lem} \label{lemmixed}
If $W \in C^1(\Omega)$, then $W_x$ and $W_y$ are distributions and we have 
\[W_{x\cal{Y}}=W_{y\cal{X}}.\]
\end{lem}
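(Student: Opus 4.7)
The first claim is immediate: since $W\in C^1(\Omega)$, the classical partials $W_x$ and $W_y$ are continuous, hence locally integrable, so each defines a regular distribution via the construction introduced just above the lemma.

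For the mixed-derivative identity, unraveling the definitions shows that what must be proved is
\[\int_\Omega \langle W_x, v_y\rangle \;=\; \int_\Omega \langle W_y, v_x\rangle \qquad\text{for every } v\in C^1_0(\Omega).\]
The tempting one-line argument --- integrate by parts once more and invoke equality of mixed partials --- is not available: $W$ is only $C^1$, so $W_{xy}$ and $W_{yx}$ need not exist pointwise, and similarly for $v$. My plan is therefore to reduce to the smooth case by mollification and then pass to the limit.

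Let $\rho_\epsilon$ be a standard smooth mollifier and set $W^\epsilon := W * \rho_\epsilon$ on $\Omega_\epsilon := \{p\in\Omega: \mathrm{dist}(p,\partial\Omega)>\epsilon\}$, so that $W^\epsilon \in C^\infty(\Omega_\epsilon)$. For any fixed $v\in C^1_0(\Omega)$, the compact set $\mathrm{supp}(v)$ lies in $\Omega_\epsilon$ for all sufficiently small $\epsilon$, and there $W^\epsilon$ is smooth. For such $\epsilon$, Clairaut--Schwarz applied to $W^\epsilon$ together with two applications of classical integration by parts --- using that $v$ is compactly supported to kill the boundary terms --- yields
\[\int_\Omega (W^\epsilon)_x\, v_y \;=\; -\int_\Omega (W^\epsilon)_{xy}\, v \;=\; -\int_\Omega (W^\epsilon)_{yx}\, v \;=\; \int_\Omega (W^\epsilon)_y\, v_x.\]

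To finish, I will send $\epsilon \to 0$. Because $(W^\epsilon)_x = W_x * \rho_\epsilon$ and $(W^\epsilon)_y = W_y * \rho_\epsilon$, and because $W_x$ and $W_y$ are \emph{continuous} (not merely $L^1_{\text{loc}}$), standard mollifier properties give uniform convergence $(W^\epsilon)_x \to W_x$ and $(W^\epsilon)_y \to W_y$ on the compact set $\mathrm{supp}(v)$. Since $v_x, v_y$ are continuous and compactly supported, both sides of the displayed identity converge to the corresponding integrals with $W^\epsilon$ replaced by $W$, giving the desired equality for the arbitrary test function $v$. The only real obstacle is the mild one already flagged, namely that the low regularity of $W$ and $v$ blocks the purely classical calculation; the mollification-and-pass-to-limit maneuver is precisely what sidesteps this.
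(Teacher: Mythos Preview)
Your argument is correct. The paper itself does not supply a proof of this lemma; it merely states that both Lemmas are ``standard results which are proven by straightforward calculations,'' so there is no detailed argument to compare against. Your mollification-and-limit proof is exactly the kind of routine verification the authors had in mind, and it handles the only genuine subtlety---that with $W$ and $v$ both merely $C^1$ one cannot directly invoke Clairaut--Schwarz or a second integration by parts---in the standard way. A minor cosmetic point: in the displayed chain for $W^\epsilon$ you dropped the inner-product brackets $\langle\,\cdot\,,\,\cdot\,\rangle$; since the pairing is bilinear the computation is of course valid entrywise, but for consistency with the paper's matrix-valued setup you may want to restore them. An equally valid variant is to mollify the test function $v$ instead of $W$ and apply Green's theorem to the $C^1$ one-form $W\,dv^\epsilon$; the limiting step is then identical to yours.
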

\begin{lem}\label{lemprod}
If $P \in C^1(\Omega)$ and $L \in L^1_{loc}(\Omega)$, then $PL$ is a regular distribution and 
\begin{eqnarray*}
\partial_{\cal{X}}(PL)&=&(\partial_x P)L+P\partial_{\cal{X}}L, \\
\partial_{\cal{Y}}(PL)&=&(\partial_y P)L+P\partial_{\cal{Y}}L.
\end{eqnarray*}
\end{lem}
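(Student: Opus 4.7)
The plan is to verify directly from the definitions that $PL$ is a regular distribution and then to unpack the distributional derivative applied to an arbitrary test function. Since $P \in C^1(\Omega)$ and its partials $\partial_x P,\partial_y P$ are continuous, they are locally bounded on $\Omega$, and combined with $L \in L^1_{loc}(\Omega)$ this shows that $PL$, $(\partial_x P)L$, and $(\partial_y P)L$ all lie in $L^1_{loc}(\Omega)$ and hence define regular distributions.

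For the derivative identity, I would fix a test function $v \in C^1_0(\Omega)$ and unfold the definition:
\[
(\partial_{\cal X}(PL))(v) \;=\; -\int_\Omega \langle PL,\partial_x v\rangle \;=\; -\int_\Omega \langle L, P\,\partial_x v\rangle,
\]
where in the scalar case the second equality is trivial and in the matrix case it uses $\langle PA,B\rangle = \langle A, P^t B\rangle$ with $P^t \in C^1$. The classical pointwise product rule $\partial_x(Pv) = (\partial_x P)v + P\,\partial_x v$ is valid because both $P$ and $v$ are $C^1$, and crucially $Pv$ remains in $C^1_0(\Omega)$ (compactly supported since $v$ is, and $C^1$ as a product of $C^1$ factors), so it is a legitimate test function. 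Substituting $P\,\partial_x v = \partial_x(Pv) - (\partial_x P)v$ splits the integral as
\[
(\partial_{\cal X}(PL))(v) \;=\; -\int_\Omega \langle L, \partial_x(Pv)\rangle \;+\; \int_\Omega \langle L, (\partial_x P)v\rangle.
\]

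The first integral on the right is, by the definition of $\partial_{\cal X}$, equal to $(\partial_{\cal X} L)(Pv)$, which by the multiplication-by-a-$C^1$-function rule $(hT)(v) = T(h^t v)$ equals $(P\,\partial_{\cal X} L)(v)$. The second integral is exactly $((\partial_x P)L)(v)$, since $(\partial_x P)L$ is the regular distribution identified in the first paragraph. Combining these two identifications yields $\partial_{\cal X}(PL) = (\partial_x P)L + P\,\partial_{\cal X} L$, and the computation for $\partial_{\cal Y}$ is word-for-word identical with $x$ replaced by $y$. The proof is essentially a one-line invocation of the classical product rule for $C^1$ functions; the only genuine concern, which is bookkeeping rather than a real obstacle, is tracking the inner product and transpose conventions when the test functions are matrix-valued and confirming that $Pv$ does not leave the test-function space $C^1_0(\Omega)$.
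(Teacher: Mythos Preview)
Your proof is correct and is exactly the ``straightforward calculation'' the paper alludes to without writing out; there is no separate argument in the paper to compare against. One cosmetic point: in the matrix-valued case your displayed line should carry $P^t$ rather than $P$ (as you yourself note in the parenthetical), so that the test function becomes $P^t v$ and matches the paper's convention $(hT)(v)=T(h^t v)$.
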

\begin{defn}
A differentiable function $u$ is called a weak solution to $\Delta u = g$ in $\Omega$ if 
\[\int_\Omega u_x v_x + u_y v_y  = -\int_\Omega gv \;\; \mbox{for all } v \in C^1_0(\Omega).\]
\end{defn}

\subsection{Generalized Liouville Equation} Assume we have chosen coordinates isothermic with respect to $f^*ds^2_{\mathbb{R}^3}$ so our metric is of the form $h^2(dx^2+dy^2)$.  We now show that $u=\ln h$ is a weak solution to $\Delta u = e^{2u}$.  
Let the $C^1(\Omega)$ matrix function $W$ be defined by
\begin{equation*}
W = (f_x,f_y,N), \hspace{2mm} \mbox{where} \hspace{2mm}
N = \frac{f_x}{ \mid f_x \mid} \times  \frac{f_y}{ \mid f_y \mid},
\end{equation*}
and define the $C^0(\Omega)$ matrices $A$ and $B$ by
\begin{equation*}
A = W^{-1} W_x  \hspace{2mm} \mbox{and} \hspace{2mm}
B = W^{-1} W_y,
\end{equation*}
or
\begin{equation*} \label{Lax}
W_x = W A \hspace{2mm} \mbox{and} \hspace{2mm} 
W_y = W B.
\end{equation*}
If $\ell := -\!<N_x,f_x>$, $m=-\!<N_x,f_y>=-\!<N_y,f_x>$, and $n=-\!<N_y,f_y>$.  Then
\[A= \left(\begin{array}{ccc} \frac{h_x}{h}&\frac{h_y}{h}&-\frac{\ell}{h^2} \\ -\frac{h_y}{h}&\frac{h_x}{h}&-\frac{m}{h^2} \\ \ell & m & 0  \end{array}\right)\; \mbox{and}\; B= \left(\begin{array}{ccc} \frac{h_y}{h}&-\frac{h_x}{h}&-\frac{m}{h^2} \\ \frac{h_x}{h}&\frac{h_y}{h}&-\frac{n}{h^2} \\ m & n & 0  \end{array}\right).\]
Thus $W \in C^1(\Omega),\; A,B \in L_{loc}^1(\Omega), W_x=WA \;\mbox{and}\; W_y=WB$.  By the product rule for distributions, Lemma \ref{lemprod}, we have
\[W_{x{\cal Y}} =W\!A_{\cal Y} +W_yA = W\!A_{\cal Y} + WBA.\]
Similarly $W_{y{\cal X}} =W\!B_{\cal X} +W_xB = W\!B_{\cal X} + WAB$.  Moreover, by the equality of mixed distributional derivatives, Lemma \ref{lemmixed}, we also have
\[W_{x{\cal Y}} = W_{y{\cal X}},\]
so
\[BA+A_{\cal Y} = AB+B_{\cal X}.\]
By definition, this means
\[(BA+A_{\cal Y})(v) = (AB+B_{\cal X})(v)\;\; \mbox{for all } v \in C^1_0(\Omega).\]
Or
\[A_{\cal Y}(v) - B_{\cal X}(v)=(AB-BA)(v) \;\; \mbox{for all } v \in C^1_0(\Omega). \]
Again, by definition, we have
\[-\int_\Omega (A^t v_y - B^tv_x)=\int_\Omega (AB-BA)^t v \;\; \mbox{for all } v \in C^1_0(\Omega). \]
By choosing $v=w \left(\begin{array}{ccc}0&0&0 \\ 1&0&0 \\0&0&0\end{array}\right)$, $w \in C^1_0(\Omega)$, we obtain
\[-\int_\Omega (A_{12}w_y - B_{12}w_x)=\int_\Omega (AB-BA)_{12}w \;\; \mbox{for all } w \in C^1_0(\Omega)\; \mbox{scalar}.\]
Substituting in $A$ and $B$ from above we have 
\[-\int_\Omega (\frac{h_y}{h} w_y + \frac{h_x}{h} w_x) = -\int_\Omega \frac{\ell n - m^2}{h^2} w \;\; \mbox{for all } w \in C^1_0(\Omega)\; \mbox{scalar}.\]
In our notation this is equivalent to
\[\Delta \ln{h} = -\frac{\ell n - m^2}{h^2}.\]
Since $-1 \equiv K=\frac{\ell n - m^2}{h^4}$ if follows that $\Delta \ln{h} = h^2$ and $u=\ln h$ is a weak solution of $\Delta u = e^{2u}$.
\subsection{Generalized Dirichlet Problem for Liouville Equation} \label{secGT}
\begin{thm}\label{thmGT}
If $u:\Omega \longrightarrow \mathbb{R}$, $u \in C^1(\Omega)$ and $u$ is a weak solution to $\Delta u = e^{2u}$, then $u \in C^\infty(\Omega)$.
\end{thm}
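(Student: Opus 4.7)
The plan is a classical elliptic regularity bootstrap. Writing $g := e^{2u}$, I would view the equation $\Delta u = g$ as a linear Poisson equation whose right-hand side inherits whatever regularity $u$ has, since $t \mapsto e^{2t}$ is real-analytic. Each improvement in the regularity of $u$ therefore immediately improves the regularity of $g$, which by interior elliptic regularity bounces back as a further improvement in $u$. Iterating indefinitely will yield $u \in C^\infty(\Omega)$.

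To start the bootstrap, I would first observe that since $u \in C^1(\Omega)$, the right-hand side $g = e^{2u}$ is also in $C^1(\Omega)$, hence locally Lipschitz and in particular in $C^{0,\alpha}_{loc}(\Omega)$ for every $\alpha \in (0,1)$. I would then invoke the interior Schauder regularity theorem for weak solutions of Poisson's equation with H\"older right-hand side (for instance Gilbarg-Trudinger \cite{GT}, Theorem 8.8 combined with Theorem 6.13), which upgrades $u$ to $C^{2,\alpha}_{loc}(\Omega)$. Once this first jump is made, $u$ is already a classical solution of $\Delta u = e^{2u}$, so the induction continues cleanly: assuming $u \in C^{k,\alpha}_{loc}$ one has $e^{2u} \in C^{k,\alpha}_{loc}$, whence Schauder gives $u \in C^{k+2,\alpha}_{loc}$, and so on for all $k$.

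The step I expect to be the main obstacle is the very first application of Schauder regularity, since one must reconcile the weak-solution notion used in Section 2.2 (test functions in $C^1_0(\Omega)$, with $u$ only assumed $C^1$) with the $H^1$-weak formulation standard in \cite{GT}. Because $u \in C^1(\Omega)$, however, the two formulations agree: a straightforward integration-by-parts plus density argument shows that the identity $\int_\Omega u_x v_x + u_y v_y = -\int_\Omega e^{2u} v$ extends from $v \in C^1_0(\Omega)$ to $v \in H^1_0(\Omega)$, which places the problem squarely within the scope of standard interior Schauder theory. Once that small compatibility check is done, the bootstrap outlined above runs without further issue and delivers $u \in C^\infty(\Omega)$.
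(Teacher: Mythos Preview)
Your proposal is correct and follows essentially the same elliptic-bootstrap strategy as the paper. The only difference is in the first jump from $C^1$ to $C^2$: the paper solves the Dirichlet problem $\Delta w = e^{2u}$ classically on a small ball (\cite{GT}, Theorem~4.3) and then invokes uniqueness of weak solutions (\cite{GT}, Theorem~8.3) to identify $w$ with $u$, whereas you cite interior Schauder regularity for weak solutions directly; after that, both arguments bootstrap via Schauder (\cite{GT}, Theorem~6.17) exactly as you describe.
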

\begin{proof}
Let $p \in \Omega$ and choose a small open ball $\Omega'$ with $p \in \Omega'$.  Let $b := u|_{\partial \Omega'}$ and $a:=u|_{\overline{\Omega'}}$.  Consider the equation $\Delta w = e^{2a}$ with Dirichlet condition $w|_{\partial \Omega'} = b$.  Note that $e^{2a} \in C^1(\Omega')$ and $b \in C^1(\partial \Omega')$.  Thus Theorem 4.3 in \cite{GT} implies that $w$ exists, is unique and $w \in C^2(\Omega')$.  Now both $w$ and $u$ solve ($u$ as a weak solution) $\Delta u = e^{2u}$ on $\Omega'$ and $w \equiv u$ on $\partial \Omega'$.  Thus Theorem 8.3 in \cite{GT} implies $w \equiv u$ on $\Omega'$ also.  In particular $u \in C^2(\Omega')$.  Now we can repeatedly apply Theorem 6.17 in \cite{GT} and obtain via bootstrapping that $u \in C^\infty(\Omega')$.
\end{proof}
\subsection{Liouville's Theorem} \label{secMira}
We will use the version of Liouville's Theorem given in \cite{GM}.
\begin{thm}\label{thmMira}
$u$ solves $\Delta u = e^{2u}$ on $\Omega$ if and only if 
\[u= \frac{1}{4} \ln \frac{4 |\phi'|^2}{(1 - |\phi|^2)^2}\]
where $\phi$ is holomorphic (with respect to $z=x+i y$) with $|\phi| < 1$ and $\phi ' \neq 0$.  Furthermore the developing map $\phi$ gives an isometric immersion of $(\Omega,e^{2u} dz^2)$ into $(\mathbb{H}^2, ds^2_{\mathbb{H}^2})$.
\end{thm}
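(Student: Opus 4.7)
The plan is to prove the two directions separately. The forward implication---that the displayed formula gives a solution of $\Delta u = e^{2u}$---reduces to a direct computation. The reverse implication---that every solution arises in this way---requires one to reconstruct the developing map $\phi$ from the scalar $u$.

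For the \emph{if} direction, I would take logarithms and differentiate term by term. The logarithm in the formula splits as $\ln 4 + \ln |\phi'|^2 - 2 \ln(1 - |\phi|^2)$. The first piece is constant, and the middle piece is harmonic because $\ln |\phi'|^2 = 2\operatorname{Re} \ln \phi'$ with $\phi'$ holomorphic and nonvanishing. The remaining piece $\Delta \ln(1 - |\phi|^2)$ is handled in one line using $\Delta = 4\,\partial_z \partial_{\bar z}$ together with $\partial_{\bar z}\phi = 0 = \partial_z \bar\phi$, yielding $-4|\phi'|^2/(1 - |\phi|^2)^2$. Reassembling produces $e^{2u}$ on the right-hand side. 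Moreover the pullback of the Poincar\'e metric $\tfrac{4}{(1-|w|^2)^2}|dw|^2$ under $w = \phi(z)$ is visibly $e^{2u}|dz|^2$, so the same $\phi$ serves as the advertised isometric immersion.

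For the \emph{only if} direction, two approaches suggest themselves. The geometric approach is to observe that $e^{2u}|dz|^2$ is a metric of constant curvature $-1$ on the simply connected surface $\Omega$, so by Killing--Hopf (equivalently, by a standard monodromy argument for the developing map) there exists a global isometric immersion $\phi : \Omega \to \mathbb{H}^2$, which is necessarily conformal and, after composition with a reflection of the disk if needed, holomorphic. The analytic approach, in Liouville's original spirit, notes that the quantity
\[
T(z) := u_{zz} - (u_z)^2
\]
is holomorphic: differentiating $T$ in $\bar z$ and substituting $u_{z \bar z} = \tfrac14 e^{2u}$ produces an exact cancellation. One then recognizes $2T$ as the Schwarzian derivative of the sought $\phi$, so $\phi$ can be recovered as the ratio of two linearly independent solutions of $w'' + T\,w = 0$, with initial conditions chosen so that $|\phi|<1$ and $\phi' \neq 0$ throughout $\Omega$.

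The principal obstacle is this reconstruction step. One must carry the ODE integration through with care---choosing branches so that $|\phi|<1$ everywhere on $\Omega$, establishing uniqueness up to M\"obius automorphisms of the disk, and ensuring $\phi'\neq 0$. Theorem~\ref{thmGT} guarantees $u \in C^\infty(\Omega)$, so the ODE and uniformization machinery apply without technical fuss; analyticity of $u$ then falls out of the resulting explicit formula, giving the closing sentence of the theorem.
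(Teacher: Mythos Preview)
The paper does not prove Theorem~\ref{thmMira}; it merely quotes it from \cite{GM} (see Section~\ref{secMira}, which consists solely of the sentence ``We will use the version of Liouville's Theorem given in \cite{GM}'' followed by the statement). So there is no argument in the paper for you to be measured against, and your sketch already goes well beyond what the authors supply.

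On its own merits your outline is the standard one and is essentially correct. The \emph{if} direction is indeed a one-line computation with $\Delta = 4\,\partial_z\partial_{\bar z}$; your verification that $T = u_{zz} - (u_z)^2$ is holomorphic is also right (differentiating and using $u_{z\bar z}=\tfrac14 e^{2u}$ gives an exact cancellation), and recovering $\phi$ as a ratio of solutions of $w'' + Tw = 0$ is Liouville's classical route. The geometric alternative via the developing map is equally valid, and in fact closer in spirit to how the paper \emph{uses} the theorem in Section~\ref{pfmain}. One caution: with the coefficient $\tfrac14$ exactly as printed in the statement, the pullback of the Poincar\'e metric $\tfrac{4}{(1-|w|^2)^2}\,|dw|^2$ under $\phi$ equals $e^{4u}|dz|^2$, not $e^{2u}|dz|^2$, and the direct computation yields $\Delta u = \tfrac12 e^{4u}$ rather than $e^{2u}$; the formula is consistent with the paper's normalization ($u=\ln h$, metric $e^{2u}|dz|^2$, $K=-1$) only with $\tfrac12$ in place of $\tfrac14$. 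You should flag this normalization discrepancy rather than assert that the identity is ``visibly'' correct as written.
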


\section{Proof of Theorem \ref{mainthm}}\label{pfmain}
\begin{proof}
By Theorem \ref{thmchw} we can assume our coordinates are isothermic with respect to the induced metric
\[f^*ds^2_{\mathbb{R}^3} = h^2(dx^2+dy^2).\]
The curvature is
\[K = -1.\]
If we define $u$ by $u = \ln{h}$, then we have that $u$ is a weak solution to
\[\Delta u = e^{2u}.\]
It then follows from Theorem \ref{thmGT} that $u \in C^\infty(\Omega)$.  Our desired isometry is then guaranteed by Theorem \ref{thmMira} where $z=x+i y$.
\end{proof}
\section{Proof of Corollary \ref{maincor}} \label{seccor}
For a $C^1$-immersion $f: \Omega \longrightarrow \mathbb{R}^3$, the induced metric may be only $C^0$ with respect to the given coordinates.  Furthermore in general $K$ may not be defined.  However it was shown in \cite{DS} that if the conditions of Corollary \ref{maincor} hold, then by a change of coordinates, the immersion $f$ becomes $C^2$.  Hence in the new coordinates the induced metric is $C^1$, $K$ is $C^0$ and all the conclusions of Theorem \ref{mainthm} hold.  

More precisely we proved the following Theorem \ref{dsthm} in \cite{DS}.  We say that a $C^1$-function $f:\Omega \longrightarrow \mathbb{R}$ is $C^{1M}$ if  its mixed partials exist, are continuous, and are equal.  A $C^1$-immersion $f: \Omega \longrightarrow \mathbb{R}^3$ is $C^{1M}$ if its components are, it is asymptotic if all the parameter curves are asymptotic, and it is called Chebyshev if $<f_x,f_x> \equiv <f_y,f_y> \equiv 1$.  Here we are assuming $\theta$, the angle from $f_x$ to $f_y$, satisfies $0 < \theta < \pi$.  The subscript  ``graph" is used because the type of coordinates used are often called graph coordinates. 
\begin{thm} \label{dsthm}
Let $\Omega$ be a rectangle and $f=f_{asyche}:\Omega \stackrel{C^{1M}}{\longrightarrow} \mathbb{R}^3$ be a regular $C^{1M}$-immersion in asymptotic Chebyshev coordinates.  Assume  $N=N_{asyche}:= \frac{f_x \times f_y}{\sin{\theta}}$ is $C^{1M}$, $N_{xy}=N_{yx}=\cos \theta N$, $f_x = N \times N_x$, and $f_y=-N \times N_y$. (Note this implies  $K \equiv -1$.)   Then there exists a $C^1$-diffeomorphism $\rho:\Omega \longrightarrow \Omega$ such that $f_{graph}=f_{asyche} \circ \rho$ is a regular $C^2$-immersion.
\end{thm}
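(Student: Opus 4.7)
The plan is to construct $\rho$ so that, in the new coordinates, the surface $f(\Omega) \subset \mathbb{R}^3$ is realized as a graph over a coordinate plane; the improved regularity of $f_{graph}$ then reduces to the regularity of the scalar height function. Fix a base point $p_0 \in \Omega$ and, after a rigid motion of $\mathbb{R}^3$, arrange that $N(p_0) = e_3$, where $e_3$ denotes the third standard basis vector. Let $\tau : \Omega \to \mathbb{R}^2$ denote the projection $\tau(x, y) := (f^1(x, y), f^2(x, y))$, with $f = (f^1, f^2, f^3)$. Since $N(p_0) = e_3$, the tangent plane to the surface at $f(p_0)$ equals $\operatorname{span}(e_1, e_2)$, so $d\tau|_{p_0}$ has rank two. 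By the $C^1$-inverse function theorem (using that $f \in C^{1M} \subset C^1$), $\tau$ is a local $C^1$-diffeomorphism at $p_0$, and I would take $\rho$ to be the inverse of $\tau$ (after identifying its image with a subregion of $\Omega$).

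In the new coordinates $(u, v) := \tau(x, y)$, the immersion takes graph form $f_{graph}(u, v) = (u, v, F(u, v))$ with $F := f^3 \circ \rho$. The upward unit normal to this graph at $(u, v, F(u, v))$ is $(-F_u, -F_v, 1)/\sqrt{1 + F_u^2 + F_v^2}$, and it must also equal $(N \circ \rho)(u, v)$. Comparing components (using that $N^3 \circ \rho > 0$ near $p_0$ by continuity of $N$) gives
\[
F_u = -\frac{N^1 \circ \rho}{N^3 \circ \rho}, \qquad F_v = -\frac{N^2 \circ \rho}{N^3 \circ \rho}.
\]
The hypothesis $N \in C^{1M}$ gives $N \in C^1$ in $(x, y)$, so each ratio $N^i/N^3$ is $C^1$ in $(x, y)$; and $\rho$ is $C^1$ in $(u, v)$ by construction. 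The chain rule then yields $F_u, F_v \in C^1$ as functions of $(u, v)$, so all four second partials of $F$ exist and are continuous, with $F_{uv} = F_{vu}$ by Schwarz's theorem. Thus $F \in C^2$, which makes $f_{graph}$ a $C^2$-immersion.

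The main obstacle is globalizing the construction. The argument above produces only a local $C^1$-diffeomorphism near $p_0$, whereas Theorem \ref{dsthm} asserts the existence of $\rho: \Omega \to \Omega$ on the full rectangle. To globalize, one must verify that $\tau$ is globally injective on $\Omega$ and that its image can be identified with $\Omega$ (up to an affine change of variable). This should exploit the rectangular shape of $\Omega$, the unit-speed asymptotic line structure of the parameter curves, and a monotonicity argument along these curves to prevent the projection from folding. Once $\rho$ is globally defined, the regularity computation of the previous paragraph applies on all of $\Omega$ and yields $f_{graph} \in C^2$.
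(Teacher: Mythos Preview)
The paper does not itself prove Theorem~\ref{dsthm}; it is quoted from \cite{DS} and used as a black box in Section~\ref{seccor}, so there is no in-paper argument to compare against directly. Your local construction is correct and is almost certainly the mechanism behind the name ``graph coordinates'': once the surface is written as $(u,v,F(u,v))$, the gradient $(F_u,F_v)$ is expressed purely through the unit normal, and the assumed $C^1$-regularity of $N$ (which follows from $N\in C^{1M}$) promotes $F$ to $C^2$ via the chain rule. Notice that your argument never invokes the Lelieuvre-type relations $f_x=N\times N_x$, $f_y=-N\times N_y$ or the equation $N_{xy}=\cos\theta\,N$; for the purely local regularity upgrade they are indeed unnecessary.

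The globalization, which you correctly isolate as the obstacle, is however a genuine gap, and the remedy you sketch does not close it. A fixed orthogonal projection $\tau$ onto a coordinate plane is a diffeomorphism only where $N$ stays in the open hemisphere $\{N^3>0\}$; on a rectangle of moderate size a $K\equiv-1$ surface will typically force $N$ out of any fixed hemisphere (already a suitable patch of the pseudosphere does this), and $\tau$ then acquires fold singularities along $\{N^3=0\}$. Monotonicity of arclength along the asymptotic lines says nothing about this, since folding of $\tau$ is governed by where $N$ becomes horizontal, not by the parametrization of individual curves. Obtaining a single global $\rho:\Omega\to\Omega$ therefore requires a different choice of target coordinates than a Euclidean height-graph; this is presumably where the hypotheses you left unused, together with the rectangular structure of $\Omega$, enter in \cite{DS}.
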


\end{document}